\theoremstyle{plain}
\newtheorem{theorem}{Theorem}[section]
\newtheorem{corollary}[theorem]{Corollary}
\newtheorem{lemma}[theorem]{Lemma}
\theoremstyle{definition}
\begin{document}

\title[Some congruences for $3$-core cubic bipartitions]{Some congruences for $3$-core cubic bipartitions}
\author[Russelle Guadalupe]{Russelle Guadalupe}
\address{Institute of Mathematics, University of the Philippines-Diliman\\
	Quezon City 1101, Philippines}
\email{rguadalupe@math.upd.edu.ph}

\renewcommand{\thefootnote}{}

\footnote{2020 \emph{Mathematics Subject Classification}: Primary 11P83; Secondary 05A15, 05A17}

\footnote{\emph{Key words and phrases}: Congruences, cubic partition, $3$-core partition.}

\renewcommand{\thefootnote}{\arabic{footnote}}
\setcounter{footnote}{0}

\begin{abstract}
In this paper, we derive certain congruences for the number of $3$-core cubic bipartitions using elementary $q$-series manipulations and dissection formulas.
\end{abstract}

\maketitle

\section{Introduction}
Throughout this paper, we denote $(q^m;q^m)_\infty = \prod_{n\geq 1}(1-q^{mn})$ for $|q| < 1$ and positive integer $m$. Recall that a partition of a positive integer $n$ is a nonincreasing sequence of positive integers, known as its parts, whose sum is $n$. The study of the arithmetic properties of the number $p(n)$ of partitions of $n$ was initiated by Ramanujan \cite{raman,raman2}, who proved the following remarkable congruences
\begin{align*}
p(5n+4)&\equiv 0\pmod{5},\\
p(7n+5)&\equiv 0\pmod{7},\\
p(11n+6)&\equiv 0\pmod{11}.
\end{align*}
Ahlgren \cite{ahlg} generalized this result by showing that infinitely many congruences of the form $p(An+B)\equiv 0\pmod{M}$ exist when $M$ is coprime to $6$. Since the inception of $p(n)$, analogous congruences for the other types of partitions have been established by various authors \cite{anhrse,chernda,cuigu,naika,wang,wang2}, including those obtained by Chan \cite{chan,chan2} for the cubic partition function $a(n)$, with generating function given by 
\begin{align*}
\sum_{n=0}^\infty a(n)q^n =\dfrac{1}{(q;q)_\infty(q^2;q^2)_\infty},
\end{align*}
and by Chen \cite{chen,chen2} for the $t$-core partition function $c_t(n)$, with generating function given by
\begin{align*}
\sum_{n=0}^\infty c_t(n)q^n =\dfrac{(q^t;q^t)_\infty^t}{(q;q)_\infty}
\end{align*}
for $t\geq 1$. We note that $a(n)$ is the number of partitions in two colors such that the second color only appears in even parts, and $a_t(n)$ is the number of partitions whose Ferrers graph has no hook numbers divisible by $t$.
In 2017, Gireesh \cite{giree} examined the arithmetic properties of the function $C_3(n)$, which is the number of cubic partitions of $n$ which are also $3$-cores, whose generating function is given by
\begin{align*}
\sum_{n=0}^\infty C_3(n)q^n =\dfrac{(q^3;q^3)_\infty^3(q^6;q^6)_\infty^3}{(q;q)_\infty(q^2;q^2)_\infty}.
\end{align*}	
Recently, da Silva and Sellers \cite{dasilse} extended Gireesh's results by obtaining dissections and congruences for $C_3(n)$, including $C_3(24n+21)\equiv 0\pmod{4}$ and $C_3(18n+14)\equiv 0\pmod{9}$. Motivated by their results, we derive in this paper some congruences for the number $CP_3(n)$ of $3$-core cubic bipartitions, whose generating function is given by
\begin{align*}
\sum_{n=0}^\infty CP_3(n)q^n =\dfrac{(q^3;q^3)_\infty^6(q^6;q^6)_\infty^6}{(q;q)_\infty^2(q^2;q^2)_\infty^2}.
\end{align*}
More precisely, $CP_3(n)$ is the number of ordered pairs $(\mu,\kappa)$ such that both $\mu$ and $\kappa$ are $3$-core cubic partitions such that the sum of all of the parts of $\mu$ and $\kappa$ is $n$. We apply elementary $q$-series manipulations and dissection formulas to present our results.

We organize our paper as follows. In Section \ref{sec2}, we list some identities that are needed to prove our results. Section \ref{sec3} contains certain dissection formulas and resulting congruences for $CP_3(n)$.

\section{Preliminaries}\label{sec2}

In this section, we present several $2$- and $3$-dissection formulas needed to derive our congruences involving $CP_3(n)$. We denote $f_m := (q^m;q^m)_{\infty}$ for integer $m\geq 1$ throughout the rest of the paper.

\begin{lemma}\label{lem21} The following $2$-dissections hold:
\begin{align}
f_1^2 &= \dfrac{f_2f_8^5}{f_4^2f_{16}^2}-2q\dfrac{f_2f_{16}^2}{f_8},\label{eq1}\\
\dfrac{1}{f_1^2} &= \dfrac{f_8^5}{f_2^5f_{16}^2}+2q\dfrac{f_4^2f_{16}^2}{f_2^5f_8},\label{eq2}\\
f_1^4 &= \dfrac{f_4^{10}}{f_2^2f_8^4}-4q\dfrac{f_2^2f_8^4}{f_4^2},\label{eq3}\\
\dfrac{f_3^3}{f_1} &= \dfrac{f_4^3f_6^2}{f_2^2f_{12}}+q\dfrac{f_{12}^3}{f_4},\label{eq4}\\
\dfrac{f_3^2}{f_1^2} &= \dfrac{f_4^4f_6f_{12}^2}{f_2^5f_8f_{24}}+2q\dfrac{f_4f_6^2f_8f_{24}}{f_2^4f_{12}},\label{eq5}\\
\dfrac{f_3}{f_1^3} &= \dfrac{f_4^6f_6^3}{f_2^9f_{12}^2}+3q\dfrac{f_4^2f_6f_{12}^2}{f_2^7},\label{eq6}\\
\dfrac{1}{f_1f_3} &= \dfrac{f_8^2f_{12}^5}{f_2^2f_4f_6^4f_{24}^2}+q\dfrac{f_4^5f_{24}^2}{f_2^4f_6^2f_8^2f_{12}}.\label{eq7}
\end{align}
\end{lemma}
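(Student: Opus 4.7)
The plan is to treat these seven identities as classical 2-dissections of eta-quotients, each derivable from Ramanujan's theta function formalism or quotable from standard references such as Hirschhorn's \emph{The Power of q}. Throughout, I would use the identifications $\varphi(q)=f_2^5/(f_1^2 f_4^2)$, $\psi(q)=f_2^2/f_1$, and $\varphi(-q)=f_1^2/f_2$, where $\varphi(q)=\sum_{n\in\mathbb{Z}} q^{n^2}$ and $\psi(q)=\sum_{n\ge 0}q^{n(n+1)/2}$ are Ramanujan's theta functions.

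For \eqref{eq1}--\eqref{eq3}, the starting point is the classical 2-dissection $\varphi(q)=\varphi(q^4)+2q\psi(q^8)$, so that $\varphi(-q)=\varphi(q^4)-2q\psi(q^8)$. Substituting these into $f_1^2=f_2\varphi(-q)$ and into $1/f_1^2=(f_4^2/f_2^5)\varphi(q)$, and converting back to the $f_k$ notation via $\varphi(q^4)=f_8^5/(f_4^2 f_{16}^2)$ and $\psi(q^8)=f_{16}^2/f_8$, immediately yields \eqref{eq1} and \eqref{eq2}. For \eqref{eq3}, I would square the $\varphi(-q)$ dissection and collapse the resulting three terms into two using the classical relations $\varphi(q)\psi(q^2)=\psi(q)^2$ and $\varphi(q)^2=\varphi(q^2)^2+4q\psi(q^4)^2$; applied with $q\mapsto q^4$, these combine the $q^0$ and $q^2$ contributions into $f_2^2\varphi(q^2)^2=f_4^{10}/(f_2^2 f_8^4)$, producing the stated two-term expression.

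Identities \eqref{eq4}--\eqref{eq7} are 2-dissections of the mixed eta-quotients $f_3^3/f_1$, $f_3^2/f_1^2$, $f_3/f_1^3$, and $1/(f_1 f_3)$. Identity \eqref{eq4} is one of Ramanujan's and can be established directly via the Jacobi triple product applied to $\psi(q)\psi(q^3)$, or quoted from Hirschhorn's treatment of the cubic theta functions. Given \eqref{eq4}, the dissections \eqref{eq5} and \eqref{eq6} would follow by multiplying it by \eqref{eq2} and by the square of \eqref{eq2} respectively, then separating even and odd powers of $q$ and simplifying the eta-quotient exponents. Identity \eqref{eq7} is obtained analogously, starting from a 2-dissection of $f_1 f_3$ (equivalent to that of $\psi(q)\psi(q^3)$) and inverting.

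The main obstacle is not conceptual but one of algebraic bookkeeping: each right-hand side is a monomial in up to seven distinct $f_k$'s, and tracking exponents through repeated substitutions and cross multiplications requires care, particularly in the derivations of \eqref{eq5}--\eqref{eq7}. Since all seven identities appear explicitly in Hirschhorn's monograph and in prior works on $3$-core and cubic partitions (including those cited in the introduction), I would organize the proof as a compact list of direct verifications, citing the standard references for the most intricate cases and only spelling out the substitutions where clarity demands it.
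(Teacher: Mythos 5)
The paper's own ``proof'' of this lemma is purely by citation: \eqref{eq3} to da Silva--Sellers, \eqref{eq1}--\eqref{eq2} to Berndt, and \eqref{eq4}--\eqref{eq7} to specific entries of Hirschhorn's monograph (with a sign change $q\mapsto -q$ for \eqref{eq6}). Your fallback position --- cite the standard references --- therefore matches the paper exactly, and your self-contained derivations of \eqref{eq1}--\eqref{eq3} from $\varphi(q)=\varphi(q^4)+2q\psi(q^8)$ together with $\varphi(q)\psi(q^2)=\psi(q)^2$ and $\varphi(q)^2=\varphi(q^2)^2+4q\psi(q^4)^2$ are correct and actually more informative than what the paper provides.

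However, your proposed route to \eqref{eq5} and \eqref{eq6} contains a concrete error: multiplying \eqref{eq4} by \eqref{eq2} gives a $2$-dissection of $\frac{f_3^3}{f_1}\cdot\frac{1}{f_1^2}=\frac{f_3^3}{f_1^3}$, and multiplying by the square of \eqref{eq2} gives one of $\frac{f_3^3}{f_1^5}$; neither is $\frac{f_3^2}{f_1^2}$ or $\frac{f_3}{f_1^3}$. The correct multipliers would be $\frac{1}{f_1f_3}$ from \eqref{eq7} and its square. Moreover, even with the right products, the even and odd parts of a product of two two-term dissections are each \emph{sums} of two (or more) eta-quotient monomials, whereas the right-hand sides of \eqref{eq5}--\eqref{eq7} are single monomials in each arithmetic progression; collapsing those sums requires further theta-function identities and is not mere ``simplification of exponents.'' Since you ultimately intend to quote Hirschhorn for the intricate cases (which is what the paper does), the lemma is still adequately justified, but the derivation of \eqref{eq5}--\eqref{eq6} as you describe it would not go through.
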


\begin{proof}
We refer to \cite[Lem. 1]{dasilse} for a proof of (\ref{eq3}) and \cite[p. 40, Entry 25]{berndt} for proofs of (\ref{eq1}) and (\ref{eq2}). Identities (\ref{eq4}), (\ref{eq5}) and (\ref{eq7}) follow from \cite[(22.7.5)]{hirsc}, \cite[(30.9.9)]{hirsc} and \cite[(30.12.3)]{hirsc}, respectively. Identity (\ref{eq6}) follows after replacing $q$ with $-q$ in \cite[(27.7.3)]{hirsc}.
\end{proof}

\begin{lemma}\label{lem22}The following $3$-dissections hold:
\begin{align}
\dfrac{1}{f_1f_2} &= \dfrac{f_9^9}{f_3^6f_6^2f_{18}^3}+q\dfrac{f_9^6}{f_3^5f_6^3}+3q^2\dfrac{f_9^3f_{18}^3}{f_3^4f_6^4}-2q^3\dfrac{f_{18}^6}{f_3^3f_6^5}+4q^4\dfrac{f_{18}^9}{f_3^2f_6^6f_9^3},\label{eq8}\\
\dfrac{f_2}{f_1^2} &= \dfrac{f_6^4f_9^6}{f_3^8f_{18}^3}+2q\dfrac{f_6^3f_9^3}{f_3^7}+4q^2\dfrac{f_6^2f_{18}^3}{f_3^6}.\label{eq9}
\end{align}
\end{lemma}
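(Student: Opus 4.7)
The plan is to derive both identities from Ramanujan's classical $3$-dissection of $\varphi(-q) = f_1^2/f_2$. Splitting $\sum_{n \in \mathbb{Z}} (-1)^n q^{n^2}$ according to $n \bmod 3$ and applying Jacobi's triple product to each piece yields the intermediate identity
\[
\frac{f_1^2}{f_2} = \frac{f_9^2}{f_{18}} - 2q\,\frac{f_3 f_{18}^2}{f_6 f_9}.
\]

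For (\ref{eq9}), I would verify the asserted identity by multiplying both sides by $f_1^2/f_2$. The left-hand side becomes $1$, while the right-hand side expands into six terms (each with an $f$-part that is a pure series in $q^3$) at $q$-powers $0, 1, 2, 3$. A direct calculation shows that the cross terms at $q^1$ and $q^2$ cancel pairwise --- concretely, $\frac{f_9^2}{f_{18}}\cdot\frac{f_6^3 f_9^3}{f_3^7} = \frac{f_3 f_{18}^2}{f_6 f_9}\cdot\frac{f_6^4 f_9^6}{f_3^8 f_{18}^3}$ and a similar matching holds for the coefficient of $q^2$. This reduces the verification to the single identity
\[
\frac{f_6^4 f_9^8}{f_3^8 f_{18}^4} - 8q^3 \frac{f_6 f_{18}^5}{f_3^5 f_9} = 1,
\]
which, after clearing denominators and substituting $q^3 \mapsto q$, becomes the classical eta-quotient relation $f_2^4 f_3^9 = f_1^8 f_3 f_6^4 + 8q f_1^3 f_2 f_6^9$, standard in the theta-function dissection literature (e.g., in Hirschhorn's \emph{The Power of $q$}).

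For (\ref{eq8}), a parallel strategy applies: multiply the asserted right-hand side by $f_1 f_2$ and verify that the product is $1$. One uses the $3$-dissection of $f_1 f_2 = (f_1^2/f_2)\cdot f_2^2$ obtained by combining the $\varphi(-q)$ dissection above with the dissection of $f_2^2$ coming from the square of the $\varphi(-q^2)$ dissection (with $q \mapsto q^2$). The unusual five-term form of (\ref{eq8}), with $q$-powers ranging from $0$ to $4$ rather than only $0$ to $2$, arises because this natural decomposition keeps certain contributions at higher powers ($q^3$ and $q^4$) rather than absorbing them back into the residue-$0$ and residue-$1$ classes modulo $3$; this is actually convenient for extracting congruences for $CP_3(n)$ along arithmetic progressions.

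The main obstacle is the algebraic bookkeeping, particularly for (\ref{eq8}): matching the specific coefficients $1, 1, 3, -2, 4$ demands careful collection of many $f_m$-quotient products via Jacobi's triple product. For (\ref{eq9}), by contrast, the entire verification reduces to the single eta-quotient identity displayed above, which is elementary given Ramanujan's $\varphi$-dissection.
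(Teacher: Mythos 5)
Your plan for (\ref{eq9}) is essentially correct and, unlike the paper (which simply cites Andrews--Hirschhorn--Sellers for this identity and Chan for (\ref{eq8})), it sketches an actual verification. The starting dissection $f_1^2/f_2=\frac{f_9^2}{f_{18}}-2q\frac{f_3f_{18}^2}{f_6f_9}$ is right, and the two cancellations you assert do check out: with $A=\frac{f_9^2}{f_{18}}$, $B=\frac{f_3f_{18}^2}{f_6f_9}$ and the three terms of (\ref{eq9}) written as $C_0+2qC_1+4q^2C_2$, one has $AC_1=BC_0=\frac{f_6^3f_9^5}{f_3^7f_{18}}$ and $AC_2=BC_1=\frac{f_6^2f_9^2f_{18}^2}{f_3^6}$, leaving exactly the relation you display. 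That residual relation is true, but calling it ``standard'' and stopping there leaves all of the content of (\ref{eq9}) in an uncited, unproved identity: it is equivalent to $\varphi(-q^3)^4-\varphi(-q)^4=8q\,f_1^3f_6^5/(f_2^3f_3)$, i.e.\ to the rationalization $\varphi(-q)\cdot\varphi(-q^9)\left(\varphi(-q^9)^2+2q\varphi(-q^9)W+4q^2W^2\right)=\varphi(-q^3)^4$ with $W=\frac{f_3f_{18}^2}{f_6f_9}$, which is precisely what the lemmas of Andrews--Hirschhorn--Sellers cited in the paper supply. You should either cite that result precisely or prove it; otherwise your argument for (\ref{eq9}) is circular in effect, since that one identity is as deep as the dissection itself.

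For (\ref{eq8}) there is a genuine error. You propose to build a $3$-dissection of $f_1f_2$ from ``$f_1f_2=(f_1^2/f_2)\cdot f_2^2$,'' but $(f_1^2/f_2)\cdot f_2^2=f_1^2f_2\neq f_1f_2$; likewise the square of $\varphi(-q^2)=f_2^4/f_4^2$ is not a dissection of $f_2^2$. The correct decomposition would be $f_1f_2=\varphi(-q)\psi(q)$ with $\psi(q)=f_2^2/f_1$, or better, the known $3$-dissection $f_1f_2=\frac{f_6f_9^4}{f_3f_{18}^2}-qf_9f_{18}-2q^2\frac{f_3f_{18}^4}{f_6f_9^2}$, which is exactly the paper's (\ref{eq10}) rewritten via the product form of the cubic continued fraction. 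Even granting that, ``verify the product is $1$'' is not mere bookkeeping: multiplying the five-term right side of (\ref{eq8}) by a three-term dissection produces fifteen eta-quotient products, and showing that the residue-$1$ and residue-$2$ classes vanish while the residue-$0$ class collapses to $1$ requires several nontrivial identities of the same depth as the target. The paper avoids all of this by quoting Chan's identity (13), which is obtained by rationalizing $1/(f_1f_2)$ against that same cubic-continued-fraction dissection; if you want a self-contained proof of (\ref{eq8}), that is the route to follow, and your proposal as written does not reach it.
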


\begin{proof}
Identity (\ref{eq8}) follows from \cite[(13)]{chan}. To obtain (\ref{eq9}), replace $q$ with $q^{1/3}$ in the $q$-expansion of $\frac{f_2}{f_1^2}$ and use \cite[Lem. 2.1, 2.6-2.8]{anhrse}. The desired identity now follows from replacing $q$ with $q^3$.
\end{proof}

\begin{lemma}\label{lem23} We have  
\begin{align}
\dfrac{f_1f_2}{f_9f_{18}} &= \dfrac{1}{x(q^3)}-q-2q^2x(q^3),\label{eq10}\\
\dfrac{f_3^4f_6^4}{f_9^4f_{18}^4} &= \dfrac{1}{x(q^3)^3}-7q^3-8q^6x(q^3)^3,\label{eq11}
\end{align}
where $x(q)=q^{-1/3}c(q)$, and 
\begin{align*}
c(q)=\dfrac{q^{1/3}}{1+\dfrac{q+q^2}{1+\dfrac{q^2+q^4}{1+\cdots}}}
\end{align*}
is the Ramanujan's cubic continued fraction.
\end{lemma}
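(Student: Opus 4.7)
The plan is to leverage the classical product form of Ramanujan's cubic continued fraction,
$$c(q) = q^{1/3}\,\frac{f_1 f_6^3}{f_2 f_3^3},$$
derivable from the Jacobi triple product and recorded, for instance, in Hirschhorn's \emph{The Power of $q$}. This gives $x(q) = f_1 f_6^3/(f_2 f_3^3)$, and in particular
$$x(q^3) = \frac{f_3 f_{18}^3}{f_6 f_9^3}, \qquad \frac{1}{x(q^3)} = \frac{f_6 f_9^3}{f_3 f_{18}^3}.$$
Under this substitution both claims become identities among eta products, and I would handle them in sequence.

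For (\ref{eq10}), I would multiply through by $f_9 f_{18}$ and rewrite the claim as the $3$-dissection
$$f_1 f_2 \;=\; \frac{f_6 f_9^4}{f_3 f_{18}^2} \;-\; q\, f_9 f_{18} \;-\; \frac{2 q^2 f_3 f_{18}^4}{f_6 f_9^2},$$
in which the three summands, viewed as power series, land respectively in the residue classes $0$, $1$, $2 \pmod 3$. This dissection of $f_1 f_2$ is a classical Jacobi-triple-product identity and can be cited from Hirschhorn's compendium of dissection formulas, or else proved in a few lines by applying the triple product to $f(-q)f(-q^2)$ and segregating residues.

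For (\ref{eq11}), I would derive it algebraically from (\ref{eq10}) together with (\ref{eq8}) of Lemma~\ref{lem22}. Writing $X = x(q^3)$ and factoring out $f_9^9/(f_3^6 f_6^2 f_{18}^3)$, identity (\ref{eq8}) takes the form
$$\frac{1}{f_1 f_2} = \frac{f_9^9}{f_3^6 f_6^2 f_{18}^3}\,\bigl(1 + qX + 3q^2 X^2 - 2q^3 X^3 + 4q^4 X^4\bigr),$$
while (\ref{eq10}) is equivalent to $1 - qX - 2q^2 X^2 = X\cdot f_1 f_2/(f_9 f_{18})$. The routine polynomial identity
$$(1 - u - 2u^2)(1 + u + 3u^2 - 2u^3 + 4u^4) = 1 - 7u^3 - 8u^6$$
at $u = qX$ then combines these two statements to yield
$$1 - 7 q^3 X^3 - 8 q^6 X^6 \;=\; \frac{X \cdot f_3^6 f_6^2 f_{18}^2}{f_9^{10}}.$$
Dividing by $X^3$ and substituting $X^2 = f_3^2 f_{18}^6/(f_6^2 f_9^6)$ collapses the right-hand side to $f_3^4 f_6^4/(f_9^4 f_{18}^4)$, which is precisely (\ref{eq11}).

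The main obstacle I anticipate is the $3$-dissection of $f_1 f_2$ invoked for (\ref{eq10}); once that is available, the remainder is purely algebraic, resting on the polynomial factorization above and the explicit formulas for $X$ and its powers. A natural alternative is to instead prove (\ref{eq11}) first by citing Ramanujan's classical identity $\frac{1}{c(q)^3} - 7 - 8 c(q)^3 = \frac{f_1^4 f_2^4}{q f_3^4 f_6^4}$ (after $q \mapsto q^3$) and then run the same algebraic bridge through (\ref{eq8}) in reverse to obtain (\ref{eq10}); either direction rests on the same polynomial identity.
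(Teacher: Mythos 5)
Your argument is correct, but note that it does genuinely more than the paper: the paper's entire proof of Lemma~\ref{lem23} is the citation \cite[Lem.~2.2--2.3]{giree}, so you have supplied a derivation where the author supplies none. Your ingredients all check out. The product form $c(q)=q^{1/3}(q;q^2)_\infty/(q^3;q^6)_\infty^3=q^{1/3}f_1f_6^3/(f_2f_3^3)$ gives $x(q^3)=f_3f_{18}^3/(f_6f_9^3)$, under which \eqref{eq10} is exactly the classical $3$-dissection $f_1f_2=f_6f_9^4/(f_3f_{18}^2)-qf_9f_{18}-2q^2f_3f_{18}^4/(f_6f_9^2)$; the three summands do lie in the residue classes $0,1,2$ modulo $3$ and agree with the series expansion of $f_1f_2$ through $q^{10}$. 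For \eqref{eq11}, factoring $f_9^9/(f_3^6f_6^2f_{18}^3)$ out of \eqref{eq8} indeed yields $1+qX+3q^2X^2-2q^3X^3+4q^4X^4$ with $X=x(q^3)$, the factorization $(1-u-2u^2)(1+u+3u^2-2u^3+4u^4)=1-7u^3-8u^6$ is correct, and the final eta-quotient bookkeeping collapses to $f_3^4f_6^4/(f_9^4f_{18}^4)$ as you claim; there is no circularity, since \eqref{eq8} is established independently (it is quoted from Chan). Your closing alternative---starting from Ramanujan's relation $1/c(q)^3-7-8c(q)^3=f_1^4f_2^4/(qf_3^4f_6^4)$---is in fact closer to what the cited lemmas of Gireesh actually rest on; your main route buys a pleasant logical economy by recycling \eqref{eq8}, at the cost of needing the $3$-dissection of $f_1f_2$ as an external input.
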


\begin{proof}
See \cite[Lem. 2.2-2.3]{giree}.
\end{proof}

\section{Dissections and congruences for $CP_3(n)$}\label{sec3}

We start presenting our results by giving a $2$-dissection of the generating function 
\begin{align}
\sum_{n=0}^\infty CP_3(n)q^n =\dfrac{f_3^6f_6^6}{f_1^2f_2^2}.\label{eq12}
\end{align}

\begin{theorem}\label{thm31} We have
\begin{align}
\sum_{n=0}^\infty CP_3(2n)q^n &= \dfrac{f_2^6f_3^{10}}{f_1^6f_6^2}+q\dfrac{f_3^6f_6^6}{f_1^2f_2^2},\label{eq13}\\
\sum_{n=0}^\infty CP_3(2n+1)q^n &= 2\dfrac{f_2^2f_3^8f_6^2}{f_1^4}.\label{eq14}
\end{align}
\end{theorem}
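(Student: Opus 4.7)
The plan is to produce a single explicit $2$-dissection of $\dfrac{f_3^6f_6^6}{f_1^2f_2^2}$ in which every modular form that appears is a power series in $q^2$, so that the even and odd parts can be read off term-by-term and the formulas (\ref{eq13}) and (\ref{eq14}) follow at once upon substituting $q^2\mapsto q$.

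The crucial observation I would exploit is the factorisation
\begin{align*}
\dfrac{f_3^6 f_6^6}{f_1^2 f_2^2} \;=\; \left(\dfrac{f_3^3}{f_1}\right)^{\!2}\cdot\dfrac{f_6^6}{f_2^2}.
\end{align*}
The second factor already lies in $\mathbb{Z}[[q^2]]$, so all of the $q$-parity structure is carried by $(f_3^3/f_1)^2$. This is exactly the shape that makes identity (\ref{eq4}) from Lemma \ref{lem21} so useful: it provides a $2$-dissection of $f_3^3/f_1$ with only two summands, and every $f_m$ appearing in the result has even index $m\in\{2,4,6,12\}$.

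I would then carry out the following routine steps. First, apply (\ref{eq4}) and square the resulting binomial to obtain
\begin{align*}
\left(\dfrac{f_3^3}{f_1}\right)^{\!2} = \dfrac{f_4^6 f_6^4}{f_2^4 f_{12}^2} + 2q\,\dfrac{f_4^2 f_6^2 f_{12}^2}{f_2^2} + q^2\,\dfrac{f_{12}^6}{f_4^2}.
\end{align*}
Next, multiply through by $f_6^6/f_2^2$ to obtain
\begin{align*}
\dfrac{f_3^6 f_6^6}{f_1^2 f_2^2} = \dfrac{f_4^6 f_6^{10}}{f_2^6 f_{12}^2} + 2q\,\dfrac{f_4^2 f_6^8 f_{12}^2}{f_2^4} + q^2\,\dfrac{f_6^6 f_{12}^6}{f_2^2 f_4^2}.
\end{align*}
Since every $f_{2k}$ on the right belongs to $\mathbb{Z}[[q^2]]$, I may separate the right-hand side into its $q$-even and $q$-odd parts by inspection: the first and third terms contribute $\sum_n CP_3(2n)q^{2n}$, and the middle term contributes $\sum_n CP_3(2n+1)q^{2n+1}$. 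Finally, substituting $q^2\mapsto q$ (which sends $f_{2k}\mapsto f_k$) yields precisely (\ref{eq13}) and (\ref{eq14}).

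The main (and really only) obstacle is spotting the initial factorisation: once one realises that $f_3^6/f_1^2 = (f_3^3/f_1)^2$ and that (\ref{eq4}) produces a $2$-dissection whose indices are all even, the remainder is mechanical squaring, multiplication, and re-indexing, with no sign, convergence, or cancellation subtleties to track.
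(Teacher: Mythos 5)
Your proposal is correct and follows essentially the same route as the paper: substitute the $2$-dissection (\ref{eq4}) into the factorisation $\frac{f_3^6f_6^6}{f_1^2f_2^2}=\left(\frac{f_3^3}{f_1}\right)^2\frac{f_6^6}{f_2^2}$, square, separate parities, and replace $q^2$ by $q$. All of your intermediate expansions check out against the paper's.
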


\begin{proof}
Substituting (\ref{eq4}) into (\ref{eq12}) yields
\begin{align*}
\sum_{n=0}^\infty CP_3(n)q^n = \left(\dfrac{f_3^3}{f_1}\right)^2\dfrac{f_6^6}{f_2^2}=\left(\dfrac{f_4^3f_6^2}{f_2^2f_{12}}+q\dfrac{f_{12}^3}{f_4}\right)^2\dfrac{f_6^6}{f_2^2}.
\end{align*}
Extracting the terms with odd and even exponents gives 
\begin{align*}
\sum_{n=0}^\infty CP_3(2n)q^{2n} &= \dfrac{f_4^6f_6^{10}}{f_2^6f_{12}^2}+q^2\dfrac{f_6^6f_{12}^6}{f_2^2f_4^2},\\
\sum_{n=0}^\infty CP_3(2n+1)q^{2n+1} &= 2q\dfrac{f_4^2f_6^8f_{12}^2}{f_2^4}.
\end{align*}
We then divide the last identity by $q$ and replace $q^2$ to $q$, arriving at identities (\ref{eq13}) and (\ref{eq14}).
\end{proof}

We next give congruences modulo $8$ and $16$ for $CP_3(n)$. 

\begin{theorem}\label{thm32} We have
\begin{align*}
CP_3(8n+3)&\equiv 0\pmod{8},\\
CP_3(8n+7)&\equiv 0\pmod{16}.
\end{align*}
\end{theorem}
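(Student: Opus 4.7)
The plan is to use Theorem \ref{thm31}, which presents the odd-index generating function as
\begin{align*}
A(q) := \sum_{n\geq 0} CP_3(2n+1)q^n = 2\dfrac{f_2^2 f_3^8 f_6^2}{f_1^4},
\end{align*}
so that $CP_3(8n+3)$ and $CP_3(8n+7)$ are precisely the coefficients of $q^{4n+1}$ and $q^{4n+3}$ in $A(q)$. First I would square identity (\ref{eq2}) to obtain
\begin{align*}
\dfrac{1}{f_1^4} = \dfrac{f_8^{10}}{f_2^{10}f_{16}^4} + 4q\dfrac{f_4^2 f_8^4}{f_2^{10}} + 4q^2\dfrac{f_4^4 f_{16}^4}{f_2^{10}f_8^2},
\end{align*}
so that multiplication by $2f_2^2 f_3^8 f_6^2$ writes $A(q)$ as a sum of three explicit terms, the last two of which already carry a factor of $8$.

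For the mod $8$ claims, I would invoke the elementary congruence $(1-x)^8\equiv (1-x^2)^4 \pmod 8$, which yields $f_3^8\equiv f_6^4 \pmod 8$. Substituting this into the first summand collapses it to $\frac{2f_6^6 f_8^{10}}{f_2^8 f_{16}^4}$, a series in $q^2$; the remaining two summands vanish mod $8$ outright. Hence every odd coefficient of $A(q)$ is divisible by $8$, which delivers both $CP_3(8n+3)\equiv 0\pmod 8$ and the weaker $CP_3(8n+7)\equiv 0\pmod 8$ in one go.

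To sharpen the second congruence to modulo $16$, I would analyze $[q^{4n+3}]A(q)\bmod 16$ piece by piece. The first summand, after the replacement $f_3^8\mapsto f_6^4$ (valid modulo $8$, hence modulo $16$ once multiplied by $2$), becomes $2\cdot(\text{series in }q^2)$, so contributes $0\pmod{16}$ to $[q^{4n+3}]$. For the remaining two summands, only the inner series modulo $2$ matters because of the leading factor $8$; iterating $f_m^{2k}\equiv f_{2m}^k \pmod 2$ rewrites each as a product of $f_j$'s whose subscripts are all multiples of $4$, so those series live in $q^{4\mathbb{Z}_{\geq 0}}$. The prefactors $8q$ and $8q^2$ then land their contributions in exponents $\equiv 1$ and $\equiv 2\pmod 4$ respectively, neither of which meets $4n+3$. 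Adding the three pieces gives $CP_3(8n+7)\equiv 0\pmod{16}$.

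The main obstacle is the modulo $2$ bookkeeping at the very end: one must verify, after iterating $f_m^{2k}\equiv f_{2m}^k$, that every surviving subscript in both inner series is divisible by $4$, so that the residue-class argument genuinely separates $4n+1$ from $4n+3$. Once that divisibility is confirmed, the two target residue classes are captured by disjoint pieces of the decomposition and the theorem drops out.
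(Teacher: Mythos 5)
Your proof is correct, and it takes a genuinely different and more elementary route than the paper. The paper writes $2f_2^2f_3^8f_6^2/f_1^4 = 2(f_3^2/f_1^2)^4\cdot f_1^4\cdot f_2^2f_6^2$, substitutes the $2$-dissections \eqref{eq5} and \eqref{eq3}, extracts the odd part to get a mod-$32$ expression for $\sum CP_3(4n+3)q^n$, and then performs a second round of dissection via \eqref{eq1} and \eqref{eq6} before separating the residues $3$ and $7$ modulo $8$; the congruences are read off from explicit factors $8$ and $16$ in the resulting eta-quotients. You instead dissect only once, by squaring \eqref{eq2} to split $1/f_1^4$ into three pieces, and then replace the second stage of dissection by two cheap congruences: $(1-x)^8\equiv(1-x^2)^4\pmod 8$ (giving $f_3^8\equiv f_6^4\pmod 8$, so the leading piece becomes an even series modulo $16$ after absorbing the factor $2$) and the iterated $f_m^{2k}\equiv f_{2m}^k\pmod 2$ (showing the two pieces carrying the factor $8$ reduce, modulo $2$, to series in $q^4$, so after the prefactors $q$ and $q^2$ they only populate exponents $\equiv 1,2\pmod 4$). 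I checked the delicate points: the squared dissection of $1/f_1^4$ is right, the binomial congruence modulo $8$ holds, and in both inner series every subscript after full iteration ($f_3^8\to f_{24}$, $f_4^2\to f_8$, $f_6^2\to f_{12}$, $f_8^4\to f_{32}$, $f_2^8\to f_{16}$, $f_4^4\to f_{16}$, $f_{16}^4\to f_{64}$, $f_8^2\to f_{16}$, with inverses handled since these are units in $\mathbb{Z}[[q]]$) is indeed divisible by $4$, so the residue classes $4n+1$ and $4n+3$ separate as you claim. Your argument is shorter and avoids the heavy bookkeeping of the paper's iterated dissections; the trade-off is that the paper's computation produces explicit generating functions for $CP_3(8n+3)$ and $CP_3(8n+7)$ that could be mined for further congruences, whereas yours yields only the divisibility statements.
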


\begin{proof}
Substituting (\ref{eq5}) and (\ref{eq3}) into (\ref{eq14}), we have
\begin{align*}
\sum_{n=0}^\infty CP_3(2n+1)q^n &= 2\dfrac{f_2^2f_3^8f_6^2}{f_1^4}\\
&=2\left(\dfrac{f_4^4f_6f_{12}^2}{f_2^5f_8f_{24}}+2q\dfrac{f_4f_6^2f_8f_{24}}{f_2^4f_{12}}\right)^4\left(\dfrac{f_4^{16}}{f_2^2f_8^4}-4q\dfrac{f_2^2f_8^4}{f_4^2}\right)f_2^2f_6^2.
\end{align*}
We extract the terms with odd exponents, divide by $q$, and then replace $q^2$ with $q$ so that
\begin{align}
\sum_{n=0}^\infty CP_3(4n+3)q^n\equiv -8\dfrac{f_2^{14}f_3^6f_6^8}{f_1^{16}f_{12}^4}+16\dfrac{f_2^{23}f_3^7f_6^5}{f_1^{19}f_4^6f_{12}^2}\pmod{32}.\label{eq15}
\end{align}
We next plug (\ref{eq1}) and (\ref{eq6}) into (\ref{eq15}) to get 
\begin{align}
\sum_{n=0}^\infty CP_3(4n+3)q^n&\equiv -8\left(\dfrac{f_4^6f_6^3}{f_2^9f_{12}^2}+3q\dfrac{f_4^2f_6f_{12}^2}{f_2^7}\right)^6\left(\dfrac{f_2f_8^5}{f_4^2f_{16}^2}-2q\dfrac{f_2f_{16}^2}{f_8}\right)\dfrac{f_2^{14}f_6^8}{f_{12}^4}\nonumber\\
&+16\left(\dfrac{f_4^6f_6^3}{f_2^9f_{12}^2}+3q\dfrac{f_4^2f_6f_{12}^2}{f_2^7}\right)^7\left(\dfrac{f_2f_8^5}{f_4^2f_{16}^2}-2q\dfrac{f_2f_{16}^2}{f_8}\right)\dfrac{f_2^{23}f_6^5}{f_4^6f_{12}^2}\pmod{32}.\label{eq16}
\end{align}
Extracting the terms in (\ref{eq16}) with even exponents and replacing $q^2$ with $q$ yields
\begin{align*}
\sum_{n=0}^\infty CP_3(8n+3)q^n&\equiv 8\dfrac{f_2^{34}f_3^{26}f_4^5}{f_1^{39}f_6^{16}f_8^2}-8q\dfrac{f_2^{26}f_6^{22}f_4^5}{f_1^{35}f_6^8f_8^2}-8q^2\dfrac{f_2^{18}f_3^{18}f_4^5}{f_1^{31}f_8^2}+8q^3\dfrac{f_2^{10}f_3^{14}f_4^5f_6^8}{f_1^{27}f_8^2}\pmod{32},
\end{align*}
which shows that $CP_3(8n+3)\equiv 0\pmod{8}$. On the other hand, choosing the terms in (\ref{eq16}) with odd exponents, dividing by $q$, and replacing $q^2$ with $q$ leads us to
\begin{align*}
\sum_{n=0}^\infty CP_3(8n+7)q^n&\equiv -16\dfrac{f_2^{36}f_3^{26}f_8^2}{f_1^{39}f_4f_6^{16}}+16q\left(\dfrac{f_2^{22}f_3^{20}f_4^5}{f_1^{33}f_6^4f_8^2}-\dfrac{f_2^{28}f_3^{22}f_8^2}{f_1^{35}f_4f_6^8}\right)-16q^2\dfrac{f_2^{20}f_3^{18}f_8^2}{f_1^{31}f_4}\\
&+16q^3\left(\dfrac{f_2^6f_3^{12}f_4^5f_6^{12}}{f_1^{25}f_8^2}-\dfrac{f_2^{12}f_3^{14}f_6^8f_8^2}{f_1^{27}f_4}\right),\pmod{32}
\end{align*}
which proves the congruence $CP_3(8n+7)\equiv 0\pmod{16}$.
\end{proof}

The following theorem provides the $3$-dissection of $CP_3(n)$. 

\begin{theorem}\label{thm33} We have
\begin{align}
\sum_{n=0}^\infty CP_3(3n)q^n &= \dfrac{f_2^2f_3^{18}}{f_1^6f_6^6}+2q\dfrac{f_3^9f_6^3}{f_1^3f_2}+28q^2\dfrac{f_6^{12}}{f_2^4},\label{eq17}\\
\sum_{n=0}^\infty CP_3(3n+1)q^n &= 2\dfrac{f_2f_3^{15}}{f_1^5f_6^3}+13q\dfrac{f_3^6f_6^6}{f_1^2f_2^2}-16q^2\dfrac{f_1f_6^{15}}{f_2^5f_3^3},\label{eq18}\\
\sum_{n=0}^\infty CP_3(3n+2)q^n &= 7\dfrac{f_3^{12}}{f_1^4}-4q\dfrac{f_3^3f_6^9}{f_1f_2^3}+16q^2\dfrac{f_1^2f_6^{18}}{f_2^6f_3^6}.\label{eq19}
\end{align}
\end{theorem}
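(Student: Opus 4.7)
The plan is to apply the 3-dissection (\ref{eq8}) twice. Writing
\begin{equation*}
\sum_{n=0}^\infty CP_3(n)q^n = f_3^6 f_6^6 \cdot \left(\frac{1}{f_1 f_2}\right)^2,
\end{equation*}
one substitutes (\ref{eq8}) into the square. Each summand on the right-hand side of (\ref{eq8}) is a monomial in $f_3, f_6, f_9, f_{18}$ times $q^k$ with $k \in \{0,1,2,3,4\}$, and these base series all lie in the ring of power series in $q^3$. Squaring thus produces a degree-$8$ polynomial in $q$ whose coefficients again lie in that ring, and grouping the fifteen cross-products by the residue of $k_1+k_2$ modulo $3$ immediately decomposes $\sum CP_3(n) q^n$ into three 3-dissected pieces that, after multiplication by $f_3^6 f_6^6$, become the three generating series in (\ref{eq17})--(\ref{eq19}).

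The key simplification is that several cross-products collapse to a common monomial in $f_3, f_6, f_9, f_{18}$. Denoting the five summands of (\ref{eq8}) by $D, E, F, G, H$ in order, one verifies the identifications $DG = EF$ and $FH = G^2$ for residue class $0$, then $DH = F^2 = EG$ for residue class $1$, and finally $DF = E^2$ together with $EH = FG$ for residue class $2$. Each residue class is thereby reduced to at most three distinct monomials, whose numerical prefactors are assembled from the coefficients $1, 1, 3, -2, 4$ of (\ref{eq8}) together with the binomial-squaring factors. For instance, the $q^3$-term of residue class $0$ receives the contributions $2 \cdot 1 \cdot (-2) = -4$ from $DG$ and $2 \cdot 1 \cdot 3 = 6$ from $EF$, yielding the net coefficient $2$ seen in the middle summand of (\ref{eq17}).

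To finish, in each residue class $r \in \{0, 1, 2\}$ one divides by $q^r$, replaces $q^3$ by $q$ (so that $f_3, f_6, f_9, f_{18}$ become $f_1, f_2, f_3, f_6$), and multiplies by the prefactor $f_1^6 f_2^6$ coming from $f_3^6 f_6^6$. The three resulting identities match (\ref{eq17}), (\ref{eq18}) and (\ref{eq19}) exactly. The main obstacle is purely the careful bookkeeping of the fifteen cross-products and their signs, in particular those inherited from the $-2q^3$ term of (\ref{eq8}); beyond the collapses above, the argument is a direct expansion requiring no new dissection identity.
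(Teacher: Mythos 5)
Your proposal is correct and follows essentially the same route as the paper: substitute the $3$-dissection \eqref{eq8} of $1/(f_1f_2)$ into $\sum CP_3(n)q^n = f_3^6f_6^6\cdot(1/(f_1f_2))^2$, square, sort by residue of the exponent modulo $3$, and replace $q^3$ by $q$. Your cross-product identifications ($DG=EF$, $FH=G^2$, $DH=F^2=EG$, $DF=E^2$, $EH=FG$) and the resulting coefficients ($2=-4+6$, $13=8-4+9$, $28=24+4$, $7=1+6$, $-4=8-12$) all check out against \eqref{eq17}--\eqref{eq19}; the paper leaves this bookkeeping implicit (and incidentally miscites \eqref{eq7} where it means \eqref{eq8}).
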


\begin{proof}
Substituting (\ref{eq7}) into (\ref{eq12}) yields
\begin{align*}
\sum_{n=0}^\infty CP_3(n)q^n = \left(\dfrac{f_9^9}{f_3^6f_6^2f_{18}^3}+q\dfrac{f_9^6}{f_3^5f_6^3}+3q^2\dfrac{f_9^3f_{18}^3}{f_3^4f_6^4}-2q^3\dfrac{f_{18}^6}{f_3^5f_6^5}+4q^4\dfrac{f_{18}^9}{f_3^2f_6^6f_9^3}\right)^2\dfrac{f_3^6}{f_6^6}.
\end{align*}
Extracting the terms with exponents congruent to $0, 1$ and $2$ modulo $3$ gives
\begin{align*}
\sum_{n=0}^\infty CP_3(3n)q^{3n} &= \dfrac{f_6^2f_9^{18}}{f_3^6f_{18}^6}+2q^3\dfrac{f_9^9f_{18}^3}{f_3^3f_6}+28q^6\dfrac{f_{18}^{12}}{f_6^4},\\
\sum_{n=0}^\infty CP_3(3n+1)q^{3n+1} &= 2q\dfrac{f_6f_9^{15}}{f_3^5f_{18}^3}+13q^4\dfrac{f_9^6f_{18}^6}{f_3^2f_6^2}-16q^7\dfrac{f_3f_{18}^{15}}{f_6^5f_9^3},\\
\sum_{n=0}^\infty CP_3(3n+2)q^{3n+2} &= 7q^2\dfrac{f_9^{12}}{f_3^4}-4q^5\dfrac{f_9^3f_{18}^9}{f_3f_6^3}+16q^8\dfrac{f_3^2f_{18}^{18}}{f_6^6f_{18}^6}.
\end{align*}
We then divide these three identities by $1, q$ and $q^2$, respectively, and replace $q^3$ with $q$ to arrive at (\ref{eq17}), (\ref{eq18}) and (\ref{eq19}).
\end{proof}

As a byproduct of Theorem \ref{thm33}, we find an infinite family of internal congruences modulo $2$.

\begin{corollary}\label{cor34} 
We have 
\begin{align}
CP_3(3^kn+3^k-2)\equiv CP_3(n-1)\pmod{2} \label{eq20}
\end{align} 
for all $k, n\geq 1$.
\end{corollary}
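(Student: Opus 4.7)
The plan is to prove (\ref{eq20}) by induction on $k$, with identity (\ref{eq18}) of Theorem \ref{thm33} providing the base case after a single reduction modulo $2$.

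For the base case $k=1$, I would reduce (\ref{eq18}) modulo $2$: the coefficients $2$, $13$, and $-16$ become $0$, $1$, and $0$ respectively, so only the middle term survives and
\begin{align*}
\sum_{n=0}^\infty CP_3(3n+1)q^n \equiv q\,\dfrac{f_3^6 f_6^6}{f_1^2 f_2^2} = q\sum_{n=0}^\infty CP_3(n)q^n \pmod{2},
\end{align*}
where the last equality is the generating function (\ref{eq12}). Comparing coefficients of $q^n$ on both sides then yields $CP_3(3n+1) \equiv CP_3(n-1) \pmod 2$ for every $n\geq 1$, which is precisely (\ref{eq20}) with $k=1$ (and also gives $CP_3(1)\equiv 0 \pmod 2$ from the constant term).

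For the inductive step, assume (\ref{eq20}) holds for some $k \geq 1$ and every integer $m \geq 1$. The key arithmetic observation is
\begin{align*}
3^{k+1}n + 3^{k+1} - 2 = 3^k(3n+2) + 3^k - 2,
\end{align*}
so applying the inductive hypothesis with $m = 3n+2 \geq 5$ (which is valid for any $n\geq 1$) gives $CP_3(3^{k+1}n + 3^{k+1} - 2) \equiv CP_3(3n+1) \pmod 2$, and one further application of the base case yields $CP_3(3n+1) \equiv CP_3(n-1) \pmod 2$, completing the induction.

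The proof is largely mechanical; the only step requiring any attention is the modulo-$2$ collapse of (\ref{eq18}) to a single surviving monomial that precisely reproduces the $CP_3$ generating function shifted by a factor of $q$, since it is this clean coincidence that forces the base case into the self-referential form needed to bootstrap the induction on $k$.
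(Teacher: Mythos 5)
Your proof is correct and follows essentially the same route as the paper: reduce (\ref{eq18}) modulo $2$ so that only the middle term survives, recognize it as $q$ times the generating function (\ref{eq12}) to get the $k=1$ case, and then iterate by induction on $k$. The only difference is that you make the inductive step explicit via the identity $3^{k+1}n+3^{k+1}-2=3^k(3n+2)+3^k-2$, which the paper leaves to the reader.
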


\begin{proof}
Reducing (\ref{eq18}) modulo $2$ yields
\begin{align*}
\sum_{n=0}^\infty CP_3(3n+1)q^n &\equiv q\dfrac{f_3^6f_6^6}{f_1^2f_2^2}\equiv \sum_{n=0}^\infty CP_3(n)q^{n+1}\pmod{2}.\\
\end{align*}
Since $CP_3(1)=2$, we have 
\begin{align*}
\sum_{n=1}^\infty CP_3(3n+1)q^n &\equiv \sum_{n=1}^\infty CP_3(n-1)q^n\pmod{2}\\
\end{align*}
so that $CP_3(3n+1)\equiv CP_3(n-1)\pmod{2}$, which is (\ref{eq20}) for $k=1$. Hence, the congruence (\ref{eq20}) for $k\geq 2$ now follows by induction.
\end{proof}

We next provide the $3$-dissection formulas for $CP_3(2n+1)$.

\begin{theorem}\label{thm35} We have
\begin{align}
\sum_{n=0}^\infty CP_3(6n+1)q^n &= 2\dfrac{f_2^6f_3^{12}}{f_1^8f_6^6}+32q\dfrac{f_2^7f_3^3f_6^3}{f_1^5},\label{eq21}\\
\sum_{n=0}^\infty CP_3(6n+3)q^n &= 8\dfrac{f_2^9f_3^9}{f_1^7f_6^3}+32q\dfrac{f_2^6f_6^6}{f_1^4},\label{eq22}\\
\sum_{n=0}^\infty CP_3(6n+5)q^n &= 24\dfrac{f_2^8f_3^6}{f_1^6}.\label{eq23}
\end{align}
\end{theorem}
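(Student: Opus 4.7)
The plan is to mimic the strategy used in Theorem \ref{thm33}, but starting from identity (\ref{eq14}) in place of (\ref{eq12}). I would first rewrite
\[
\sum_{n=0}^\infty CP_3(2n+1)q^n \;=\; 2\,f_3^8 f_6^2 \left(\frac{f_2}{f_1^2}\right)^2,
\]
and then substitute the $3$-dissection (\ref{eq9}) for $f_2/f_1^2$ and expand the square. Denoting the three summands on the right-hand side of (\ref{eq9}) by $A$, $qB$, and $q^2 C$, where $A$, $B$, $C$ are pure series in $q^3, q^6, q^9, q^{18}$, the square expands as
\[
(A + qB + q^2 C)^2 \;=\; A^2 \;+\; 2qAB \;+\; q^2(B^2 + 2AC) \;+\; 2q^3 BC \;+\; q^4 C^2,
\]
whose five summands sort cleanly by the residue of their $q$-exponent modulo $3$.

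Next I would collect these terms according to residue mod $3$. The residue-$0$ class receives $A^2$ together with $2q^3 BC$; the residue-$1$ class receives $2qAB$ together with $q^4 C^2$; and the residue-$2$ class receives only the combined term $q^2(B^2 + 2AC)$. The prefactor $2 f_3^8 f_6^2$ is a series in $q^3$ and so preserves these residue classes under multiplication. For each of the three classes I would then extract the corresponding subseries of $\sum CP_3(2n+1) q^n$, divide respectively by $q^0$, $q^1$, $q^2$, and apply the substitution $q^3 \mapsto q$ (equivalently $f_3 \mapsto f_1$, $f_6 \mapsto f_2$, $f_9 \mapsto f_3$, $f_{18} \mapsto f_6$) to arrive at (\ref{eq21}), (\ref{eq22}), and (\ref{eq23}).

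The argument is essentially a mechanical computation in the same spirit as the proofs of Theorems \ref{thm31} and \ref{thm33}, so I do not expect a conceptual obstacle. The main practical concern is bookkeeping: one must carefully multiply out the five cross-terms of $(A + qB + q^2 C)^2$ and their subsequent products with $2 f_3^8 f_6^2$, correctly combine the two contributions in both the residue-$0$ and residue-$1$ classes, and add $B^2$ to $2AC$ in the residue-$2$ class so as to produce the single summand of (\ref{eq23}) with coefficient $24$. The fact that (\ref{eq23}) collapses to a single term provides a convenient sanity check that the squaring and grouping have been performed correctly.
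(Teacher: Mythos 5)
Your proposal is correct and follows exactly the paper's proof: the paper substitutes the $3$-dissection (\ref{eq9}) of $f_2/f_1^2$ into (\ref{eq14}) (miscited there as (\ref{eq8})), squares, sorts the five cross-terms by the residue of the exponent modulo $3$, and normalizes by $q^3\mapsto q$, precisely as you describe. One caveat for your bookkeeping: the term $2f_3^8f_6^2A^2$ produces $f_2^{10}$ rather than $f_2^6$ in the first summand of (\ref{eq21}) (as a weight count and the value $CP_3(13)=228$ both confirm), so the printed exponent there appears to be a typo rather than an error in your method.
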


\begin{proof}
Substituting (\ref{eq8}) into (\ref{eq14}) yields
\begin{align*}
\sum_{n=0}^\infty CP_3(2n+1)q^n &=2\left(\dfrac{f_6^4f_9^6}{f_3^8f_{18}^3}+2q\dfrac{f_6^3f_9^3}{f_3^7}+4q^2\dfrac{f_6^2f_{18}^3}{f_3^6}\right)^2f_3^8f_6^2.
\end{align*}
Extracting the terms with exponents congruent to $0, 1$ and $2$ modulo $3$ gives
\begin{align*}
\sum_{n=0}^\infty CP_3(6n+1)q^{3n} &= 2\dfrac{f_6^6f_9^{12}}{f_3^8f_{18}^6}+32q^3\dfrac{f_6^7f_9^3f_{18}^3}{f_3^5},\\
\sum_{n=0}^\infty CP_3(6n+3)q^{3n+1} &= 8q\dfrac{f_6^9f_9^9}{f_3^7f_{18}^3}+32q^4\dfrac{f_6^6f_{18}^6}{f_3^4},\\
\sum_{n=0}^\infty CP_3(6n+5)q^{3n+2} &= 24q^2\dfrac{f_6^8f_9^6}{f_3^6}.
\end{align*}
We then divide these three identities by $1, q$ and $q^2$, respectively, and replace $q^3$ with $q$ to arrive at (\ref{eq21}), (\ref{eq22}) and (\ref{eq23}).
\end{proof}

We now enumerate congruences for $CP_3(24n+r)$ for some values of nonnegative integers $r\leq 23$.

\begin{theorem}\label{thm36}We have
\begin{align*}
CP_3(24n+7)&\equiv 0\pmod{16},\\
CP_3(24n+13)&\equiv 0\pmod{4},\\
CP_3(24n+19)&\equiv 0\pmod{8}.
\end{align*}
\end{theorem}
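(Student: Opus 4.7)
The plan is to recognize that the first and third congruences are immediate re-indexings of Theorem~\ref{thm32}. Specifically, since $24n+7 = 8(3n)+7$, substituting $n \mapsto 3n$ in $CP_3(8n+7)\equiv 0\pmod{16}$ yields $CP_3(24n+7)\equiv 0\pmod{16}$; and since $24n+19 = 8(3n+2)+3$, substituting $n \mapsto 3n+2$ in $CP_3(8n+3)\equiv 0\pmod{8}$ yields $CP_3(24n+19)\equiv 0\pmod{8}$. Neither case requires any new computation.

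The middle congruence is the only step that takes real work. Writing $24n+13 = 6(4n+2)+1$, the plan is to extract $CP_3(24n+13)$ as the coefficient of $q^{4n+2}$ in the generating function (\ref{eq21}). The second term of (\ref{eq21}) carries prefactor $32$ and therefore vanishes modulo $4$, leaving
\[
CP_3(24n+13)\equiv 2\,[q^{4n+2}]\dfrac{f_2^6 f_3^{12}}{f_1^8 f_6^6}\pmod{4}.
\]
Applying Freshman's dream $f_m^{2k}\equiv f_{2m}^k\pmod{2}$ with $(m,k)=(1,4)$ and $(m,k)=(3,6)$ reduces this fraction modulo $2$ to the series $f_2^2$, which lies in $q^2$. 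Hence $[q^{4n+2}]f_2^2 = [q^{2n+1}]f_1^2\equiv [q^{2n+1}]f_2\equiv 0\pmod{2}$, since $f_2$ has only even-degree terms. Combining a factor of $2$ from the surviving prefactor with the mod-$2$ vanishing of the extracted coefficient yields $CP_3(24n+13)\equiv 0\pmod{4}$.

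I do not expect a genuine obstacle in this proof: the extreme cases follow by direct substitution in Theorem~\ref{thm32}, and the middle case collapses to a one-line Freshman-dream reduction once one identifies the correct residue arithmetic. The one point worth keeping careful track of is the accounting of powers of $2$ in the middle case, where one factor comes from the explicit prefactor in (\ref{eq21}) and the second from the vanishing of an odd-indexed coefficient of a power series in $q^2$; these must combine cleanly to give the modulus $4$.
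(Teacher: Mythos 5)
Your proof is correct, and it takes a genuinely different and considerably shorter route than the paper's. For the first and third congruences you observe that $24n+7\equiv 7\pmod{8}$ and $24n+19\equiv 3\pmod{8}$, so both are literal specializations ($n\mapsto 3n$ and $n\mapsto 3n+2$, respectively) of Theorem \ref{thm32}; the paper instead re-derives them from scratch by substituting (\ref{eq3}) and (\ref{eq5}) into (\ref{eq21}), passing through the intermediate series for $CP_3(12n+7)$ in (\ref{eq26}), and then applying (\ref{eq2}) and (\ref{eq6}) to perform a second $2$-dissection. For the middle congruence the paper likewise carries out two successive $2$-dissections of (\ref{eq21}) (via (\ref{eq25})), keeping full generating-function congruences modulo $32$ and $8$, whereas you discard the $32q(\cdots)$ term of (\ref{eq21}) modulo $4$ and reduce the surviving eta-quotient modulo $2$ using $f_m^2\equiv f_{2m}\pmod{2}$, arriving at $f_2^2\equiv f_4\pmod{2}$, which is supported on exponents divisible by $4$ and hence has vanishing coefficient at $q^{4n+2}$; combined with the explicit factor $2$ this gives the modulus $4$. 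Your bookkeeping of the two factors of $2$ is clean and the division by $f_1^8f_6^6$ is legitimate since these are units in $\mathbb{F}_2[[q]]$. Both arguments are valid; the paper's approach buys explicit dissection formulas modulo $32$ that stay within the framework of Theorem \ref{thm35} and could be mined for further congruences, while yours exposes that two of the three congruences carry no information beyond Theorem \ref{thm32} and that the third needs only a one-line parity argument.
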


\begin{proof}
Substituting (\ref{eq3}) and (\ref{eq5}) into (\ref{eq21}) and reducing modulo $32$, we get
\begin{align}
\sum_{n=0}^\infty CP_3(6n+1)q^n&\equiv 2\left(\dfrac{f_4^4f_6f_{12}^2}{f_2^5f_8f_{24}}+2q\dfrac{f_4f_6^2f_8f_{24}}{f_2^4f_{12}}\right)^6\left(\dfrac{f_4^{10}}{f_2^2f_8^4}-4q\dfrac{f_2^2f_8^4}{f_4^2}\right)\dfrac{f_2^6}{f_6^6}\pmod{32}.\label{eq24}
\end{align}
Extracting the terms in (\ref{eq24}) with even exponents and replacing $q^2$ with $q$ yields
\begin{align}
\sum_{n=0}^\infty CP_3(12n+1)q^n&\equiv 2\dfrac{f_2^{34}f_6^{12}}{f_1^{26}f_4^{10}f_{12}^6}\pmod{8}.\label{eq25}
\end{align}
We next put (\ref{eq2}) in (\ref{eq25}), extract the terms with odd exponents, divide by $q$, and replace $q^2$ with $q$, obtaining
\begin{align*}
\sum_{n=0}^\infty CP_3(24n+13)q^n&\equiv 4\dfrac{f_3^{12}f_4^{59}}{f_1^{31}f_2^8f_6^6f_8^{22}}\pmod{8},
\end{align*}
so that $CP_3(24n+13)\equiv 0\pmod{4}$. On the other hand, we consider the terms in (\ref{eq24}) with odd exponents, divide by $q$, and replace $q^2$ with $q$ to get
\begin{align}
\sum_{n=0}^\infty CP_3(12n+7)q^n&\equiv -8\dfrac{f_2^{22}f_6^{12}}{f_1^{22}f_4^2f_{12}^6}-8\dfrac{f_2^{31}f_3f_6^9}{f_1^{25}f_4^8f_{12}^4}\pmod{32}.\label{eq26}
\end{align}
Substituting (\ref{eq2}) and (\ref{eq6}) into (\ref{eq26}) yields
\begin{align}
\sum_{n=0}^\infty CP_3(12n+7)q^n&\equiv -8\left(\dfrac{f_8^5}{f_2^5f_{16}^2}+2q\dfrac{f_4^2f_{16}^2}{f_2^5f_8}\right)^{11}\dfrac{f_2^{22}f_6^{12}}{f_4^2f_{12}^6}\nonumber\\
&-8\left(\dfrac{f_4^6f_6^3}{f_2^9f_{12}^2}+3q\dfrac{f_4^2f_6f_{12}^2}{f_2^7}\right)\left(\dfrac{f_8^5}{f_2^5f_{16}^2}+2q\dfrac{f_4^2f_{16}^2}{f_2^5f_8}\right)^{11}\dfrac{f_2^{31}f_6^9}{f_4^8f_{12}^4}\pmod{32}.\label{eq27}
\end{align}
We consider the terms in (\ref{eq27}) with even exponents and replace $q^2$ with $q$ to arrive at
\begin{align*}
\sum_{n=0}^\infty CP_3(24n+7)q^n&\equiv -16\dfrac{f_3^{12}f_4^{55}}{f_1^{33}f_2^2f_6^6f_8^{22}}\pmod{32},
\end{align*}
which follows that $CP_3(24n+7)\equiv 0\pmod{16}$. On the other hand, we consider the terms in (\ref{eq27}) with odd exponents, divide by $q$, and replace $q^2$ with $q$ to get
\begin{align*}
\sum_{n=0}^\infty CP_3(24n+19)q^n&\equiv 8\dfrac{f_3^{10}f_4^{55}}{f_1^{31}f_2^6f_6^2f_8^{22}}-8\dfrac{f_3^{12}f_4^{49}}{f_1^{33}f_6^6f_8^{18}}\pmod{32}, 
\end{align*}
so that $CP_3(24n+19)\equiv 0\pmod{8}$.
\end{proof}

\begin{theorem}\label{thm37} We have
\begin{align*}
CP_3(24n+15)&\equiv 0\pmod{16},\\
CP_3(24n+21)&\equiv 0\pmod{16}.
\end{align*}
\end{theorem}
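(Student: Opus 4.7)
The plan is to reduce identity \eqref{eq22} modulo $16$ and then analyze the resulting eta-quotient modulo $2$ via \eqref{eq7}. Since $24n+15=6(4n+2)+3$, $24n+21=6(4n+3)+3$, and the summand $32q\,f_2^6f_6^6/f_1^4$ in \eqref{eq22} is $\equiv 0\pmod{16}$, we obtain
$$CP_3(24n+15)\equiv 8[q^{4n+2}]A\pmod{16},\qquad CP_3(24n+21)\equiv 8[q^{4n+3}]A\pmod{16},$$
where $A:=f_2^9f_3^9/(f_1^7f_6^3)$. It therefore suffices to show that the coefficients of $q^{4n+2}$ and $q^{4n+3}$ in $A$ are even for every $n\geq 0$.

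Using $f_{2k}\equiv f_k^2\pmod 2$ repeatedly, I would first simplify
$$A\equiv\frac{f_1^{18}f_3^9}{f_1^7f_3^6}=f_1^{11}f_3^3=f_1^8(f_1f_3)^3\equiv f_8\cdot\frac{f_4f_{12}}{f_1f_3}\pmod 2,$$
where the final step uses $(f_1f_3)^4\equiv(f_2f_6)^2\equiv f_4f_{12}\pmod 2$. Substituting \eqref{eq7} then yields the $2$-dissection
$$A\equiv\frac{f_8^3f_{12}^6}{f_2^2f_6^4f_{24}^2}+q\,\frac{f_4^6f_{24}^2}{f_2^4f_6^2f_8}\pmod 2.$$
Since $f_{2k}(q)=f_k(q^2)$, replacing $q^2\to q$ in the even part gives $A_0(q):=f_4^3f_6^6/(f_1^2f_3^4f_{12}^2)$, and another application of $f_{2k}\equiv f_k^2\pmod 2$ collapses this to $A_0\equiv f_2^5\pmod 2$. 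Since $f_2^5\in\mathbb{Z}[[q^2]]$, its coefficients at odd exponents of $q$ vanish, so $[q^{4n+2}]A\equiv[q^{2n+1}]A_0\equiv 0\pmod 2$. An identical treatment of the odd part gives $A_1(q):=f_2^6f_{12}^2/(f_1^4f_3^2f_4)\equiv f_4f_6^3\pmod 2$, again in $\mathbb{Z}[[q^2]]$, whence $[q^{4n+3}]A\equiv 0\pmod 2$. The two claimed congruences follow.

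The main conceptual step is recognizing that the mod-$2$ reduction of $A$ collapses to $f_4f_8f_{12}/(f_1f_3)$, which is precisely the shape to which identity \eqref{eq7} applies. The remaining delicate point is correctly halving the indices when passing from the $q^2$-series to its $q$-counterpart, but each such step is a direct application of $f_{2k}(q)=f_k(q^2)$ and $f_k^2\equiv f_{2k}\pmod 2$.
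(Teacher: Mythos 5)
Your proof is correct, and it takes a genuinely different and lighter route than the paper's. The paper proves both congruences by brute-force double $2$-dissection modulo $32$: it substitutes (\ref{eq3}), (\ref{eq5}) and (\ref{eq7}) into (\ref{eq22}), extracts the even and odd parts to get expressions for $\sum CP_3(12n+3)q^n$ and $\sum CP_3(12n+9)q^n$ modulo $32$, and then dissects each of these once more using (\ref{eq2}), (\ref{eq6}) and (\ref{eq7}), ending with explicit eta-quotient representatives of $\sum CP_3(24n+15)q^n$ and $\sum CP_3(24n+21)q^n$ modulo $32$. You instead discard the $32q\,f_2^6f_6^6/f_1^4$ term of (\ref{eq22}) at the outset (legitimate modulo $16$, since its coefficients are integers), reduce the single remaining quotient $f_2^9f_3^9/(f_1^7f_6^3)$ modulo $2$ via the Frobenius-type congruence $f_k^2\equiv f_{2k}\pmod 2$ down to $f_4f_8f_{12}/(f_1f_3)$, and then need only one application of (\ref{eq7}); the two dissection components collapse modulo $2$ to $f_2^5$ and $f_4f_6^3$, which are series in $q^2$, killing exactly the arithmetic progressions $4n+2$ and $4n+3$ that correspond to $24n+15$ and $24n+21$. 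I checked each reduction ($f_1^{11}f_3^3\equiv f_8(f_1f_3)^4/(f_1f_3)$, the substitution of (\ref{eq7}), the halving of indices, and the final collapses $A_0\equiv f_2^5$, $A_1\equiv f_4f_6^3$) and all are sound; inverting unit power series preserves congruences modulo $2$, so the manipulations with quotients are justified. What your argument gives up is the sharper information the paper's computation happens to produce (explicit representatives modulo $32$ showing the progressions are $\equiv 16\cdot(\text{integer series})$), but that extra precision is not part of the stated theorem, and your derivation is considerably shorter and less error-prone.
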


\begin{proof}
Putting (\ref{eq3}), (\ref{eq5}) and (\ref{eq7}) in (\ref{eq22}) and reducing modulo $32$, we see that the right-hand side of (\ref{eq22}) is congruent to 
\begin{align}
8\left(\dfrac{f_4^4f_6f_{12}^2}{f_2^5f_8f_{24}}+2q\dfrac{f_4f_6^2f_8f_{24}}{f_2^4f_{12}}\right)^5\left(\dfrac{f_8^2f_{12}^5}{f_2^2f_4f_6^4f_{24}^2}+q\dfrac{f_4^5f_{24}^2}{f_2^4f_6^2f_8^2f_{12}}\right)\left(\dfrac{f_4^{10}}{f_2^2f_8^4}-4q\dfrac{f_2^2f_8^4}{f_4^2}\right)\dfrac{f_2^9}{f_6^3}\label{eq28}\pmod{{32}}.
\end{align}
Extracting the terms in (\ref{eq28}) with even exponents and replacing $q^2$ with $q$ yields
\begin{align}
\sum_{n=0}^\infty CP_3(12n+3)q^n&\equiv 8\dfrac{f_2^{29}f_6^{15}}{f_1^{20}f_3^2f_8^7f_{12}^7}-16q\dfrac{f_2^{32}f_3f_6^6}{f_1^{21}f_4^9f_{12}}\pmod{32}.\label{eq29}
\end{align}
We next apply (\ref{eq2}), (\ref{eq6}) and (\ref{eq7}) into (\ref{eq29}) to arrive at
\begin{align}
\sum_{n=0}^\infty CP_3(12n+3)q^n&\equiv 8\left(\dfrac{f_8^5}{f_2^5f_{16}^2}+2q\dfrac{f_4^2f_{16}^2}{f_2^5f_8}\right)^9\left(\dfrac{f_8^2f_{12}^5}{f_2^2f_4f_6^4f_{24}^2}+q\dfrac{f_4^5f_{24}^2}{f_2^4f_6^2f_8^2f_{12}}\right)\dfrac{f_2^{29}f_6^{15}}{f_4^7f_{12}^7}\nonumber\\
&-16q\left(\dfrac{f_8^5}{f_2^5f_{16}^2}+2q\dfrac{f_4^2f_{16}^2}{f_2^5f_8}\right)^9\left(\dfrac{f_4^6f_6^3}{f_2^9f_{12}^2}+3q\dfrac{f_4^2f_6f_{12}^2}{f_2^7}\right)\dfrac{f_2^{32}f_6^6}{f_4^9f_{12}}\pmod{32}.\label{eq30}
\end{align}
We consider the terms in (\ref{eq30}) with odd exponents, divide by $q$, and replace $q^2$ with $q$ to get
\begin{align*}
\sum_{n=0}^\infty CP_3(24n+15)q^n&\equiv 16\dfrac{f_3^7f_4^{43}f_6^3}{f_1^{20}f_2^7f_8^{14}f_{12}^4}+16q\dfrac{f_2^5f_3^{11}f_4^{35}f_{12}^4}{f_1^{24}f_6^9f_8^{14}}\pmod{32}, 
\end{align*}
so that $CP_3(24n+15)\equiv 0\pmod{16}$. On the other hand, we look for the terms in (\ref{eq28}) with odd exponents, divide by $q$, and replace $q^2$ with $q$ so that
\begin{align}
\sum_{n=0}^\infty CP_3(12n+9)q^n&\equiv 8\dfrac{f_2^{35}f_6^9}{f_1^{22}f_4^{11}f_{12}^3}-16\dfrac{f_2^{26}f_6^{12}}{f_1^{19}f_3f_4^5f_{12}^5}\pmod{32}.\label{eq31}
\end{align}
We now apply (\ref{eq2}) and (\ref{eq7}) into (\ref{eq31}) to get
\begin{align}
\sum_{n=0}^\infty CP_3(12n+9)q^n&\equiv 8\left(\dfrac{f_8^5}{f_2^5f_{16}^2}+2q\dfrac{f_4^2f_{16}^2}{f_2^5f_8}\right)^{11}\dfrac{f_2^{35}f_6^9}{f_4^{11}f_{12}^3}\nonumber\\
&-16\left(\dfrac{f_8^5}{f_2^5f_{16}^2}+2q\dfrac{f_4^2f_{16}^2}{f_2^5f_8}\right)^9\left(\dfrac{f_8^2f_{12}^5}{f_2^2f_4f_6^4f_{24}^2}+q\dfrac{f_4^5f_{24}^2}{f_2^4f_6^2f_8^2f_{12}}\right)\dfrac{f_2^{26}f_6^{12}}{f_4^5f_{12}^5}\pmod{32}.\label{eq32}
\end{align}
We extract the terms in (\ref{eq32}) with odd exponents, divide by $q$, and replace $q^2$ with $q$ so that
\begin{align*}
\sum_{n=0}^\infty CP_3(24n+21)q^n&\equiv 16\dfrac{f_3^9f_4^{49}}{f_1^{20}f_2^9f_6^3f_8^{18}}-16\dfrac{f_3^{10}f_4^{43}f_{12}^2}{f_1^{23}f_6^6f_8^{18}}\pmod{32}, 
\end{align*}
which yields $CP_3(24n+21)\equiv 0\pmod{16}$.
\end{proof}

\begin{theorem}\label{thm38} We have
\begin{align*}
CP_3(24n+11)&\equiv 0\pmod{48},\\
CP_3(24n+23)&\equiv 0\pmod{96}.
\end{align*}
\end{theorem}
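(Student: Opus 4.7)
My plan is to start from identity (\ref{eq23}), noting that $24n+11 = 6(4n+1)+5$ and $24n+23 = 6(4n+3)+5$. Both claims therefore reduce to understanding $[q^{4n+1}]$ and $[q^{4n+3}]$ of the generating series $24\,f_2^8 f_3^6/f_1^6$ in (\ref{eq23}). Since a factor of $24$ is already present, it suffices to show that the relevant coefficients of $f_2^8 f_3^6/f_1^6$ are divisible by $2$ and $4$, respectively; equivalently, I will work with the whole identity modulo $96$.

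The first step is to square the $2$-dissection (\ref{eq6}) for $f_3/f_1^3$ and then multiply by $f_2^8 f_3^4$, yielding
\begin{align*}
\sum_{n\geq 0} CP_3(6n+5)\,q^n = 24\,\dfrac{f_3^4 f_4^{12} f_6^6}{f_2^{10} f_{12}^4} + 144\,q\,\dfrac{f_3^4 f_4^8 f_6^4}{f_2^8} + 216\,q^2\,\dfrac{f_3^4 f_4^4 f_6^2 f_{12}^4}{f_2^6}.
\end{align*}
The second step is to $2$-dissect $f_3^4$ via (\ref{eq3}) with $q\mapsto q^3$. The key numerical coincidence is that the ``error'' piece $-4q^3 f_6^2 f_{24}^4/f_{12}^2$ of this dissection acquires prefactors $-96,\,-576,\,-864$ across the three terms, all divisible by $96$; meanwhile the main prefactors satisfy $24\equiv 24$, $144\equiv 48$, $216\equiv 24\pmod{96}$. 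Consequently,
\begin{align*}
\sum_{n\geq 0} CP_3(6n+5)\,q^n \equiv 24\,A(q^2) + 48\,q\,B(q^2) + 24\,q^2\,C(q^2) \pmod{96},
\end{align*}
where $A,B,C$ are power series in $q$ obtained by the substitution $f_{2k}\mapsto f_k$; explicitly $B(q) = f_2^8 f_3^2 f_6^{10}/(f_1^8 f_{12}^4)$.

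The final step uses the parity of the $q$-exponent. The three summands live in $q$-residues $0,1,2\pmod 2$, so for any odd target index only the middle term contributes. For $CP_3(24n+11)$ this immediately gives $48\cdot [q^{2n}] B(q)\equiv 0\pmod{48}$, which is exactly what is needed. For $CP_3(24n+23)$ the same reasoning produces $48\cdot [q^{2n+1}] B(q)\pmod{96}$, and the only subtle step -- which I expect to be the main obstacle -- is showing that $[q^{2n+1}] B(q)$ is even. I would establish this by reducing $B$ modulo $2$ with the elementary congruences $f_1^{2k}\equiv f_2^k$ and $f_3^{2k}\equiv f_6^k\pmod 2$, which collapse $B(q)$ to $f_2^4 f_6^{11}/f_{12}^4\pmod 2$; the latter is a power series in $q^2$, so all its odd-indexed coefficients vanish mod $2$, completing the proof of $CP_3(24n+23)\equiv 0\pmod{96}$.
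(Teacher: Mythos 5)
Your proof is correct, and while it shares the paper's starting point --- the identity $\sum_{n\ge 0}CP_3(6n+5)q^n=24f_2^8f_3^6/f_1^6$ from (\ref{eq23}) followed by two rounds of $2$-dissection --- the execution is genuinely different. The paper writes $f_3^6/f_1^6=(f_3^2/f_1^2)^3$ and cubes (\ref{eq5}), extracts the odd part to get the generating function of $CP_3(12n+11)$ with coefficients $144$ and $192$, and then performs a second full dissection via (\ref{eq2}), (\ref{eq4}) and (\ref{eq5}), reading off the mod $96$ and mod $192$ reductions of the even and odd parts. You instead write $f_3^6/f_1^6=f_3^4(f_3/f_1^3)^2$, square (\ref{eq6}), and dissect the leftover $f_3^4$ with (\ref{eq3}) at $q\mapsto q^3$; the decisive observation that the error term's prefactors $-96$, $-576$, $-864$ all vanish modulo $96$ lets you avoid the second full dissection entirely, reducing everything to the single series $B(q)=f_2^8f_3^2f_6^{10}/(f_1^8f_{12}^4)$ with $CP_3(12m+11)\equiv 48[q^m]B(q)\pmod{96}$. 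The final step, showing $[q^{2n+1}]B(q)$ is even via $f_1^2\equiv f_2\pmod 2$ so that $B\equiv f_2^4f_6^{11}/f_{12}^4\pmod 2$ is a series in $q^2$, is a clean parity argument that replaces the paper's explicit odd-part extraction. I verified the squared dissection, the prefactor arithmetic ($144\equiv 48$, $216\equiv 24\pmod{96}$), and the mod $2$ collapse; all are correct. Your route is more economical and makes the source of the extra factor of $2$ at $24n+23$ transparent, at the cost of yielding slightly less explicit information than the paper's closed-form dissection formulas (e.g.\ the exact series $48f_2^7f_3^5f_4^3f_6^3/(f_1^6f_8^2f_{12}^2)$ modulo $96$); as a bonus, your intermediate step already gives $CP_3(12m+11)\equiv 0\pmod{48}$ for all $m$, which the paper's (\ref{eq34}) also implies but does not state.
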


\begin{proof}
We start with putting (\ref{eq5}) into (\ref{eq23}), obtaining
\begin{align}
\sum_{n=0}^\infty CP_3(6n+5)q^n=24\left(\dfrac{f_4^4f_6f_{12}^2}{f_2^5f_8f_{24}}+2q\dfrac{f_4f_6^2f_8f_{24}}{f_2^4f_{12}}\right)^3f_2^8.\label{eq33}
\end{align}
Considering the terms in (\ref{eq33}) with odd exponents, we divide them by $q$ and replace $q^2$ with $q$ so that
\begin{align}
\sum_{n=0}^\infty CP_3(12n+11)q^n=144\dfrac{f_2^9f_3^4f_6^3}{f_1^6f_4f_{12}}+192q\dfrac{f_2^3f_3^6f_4^3f_{12}^3}{f_1^4f_6^3}.\label{eq34}
\end{align}
We now apply (\ref{eq2}), (\ref{eq4}) and (\ref{eq5}) into (\ref{eq34}) to get
\begin{align}
\sum_{n=0}^\infty CP_3(12n+11)q^n&=144\left(\dfrac{f_4^4f_6f_{12}^2}{f_2^5f_8f_{24}}+2q\dfrac{f_4f_6^2f_8f_{24}}{f_2^4f_{12}}\right)^2\left(\dfrac{f_8^5}{f_2^5f_{16}^2}+2q\dfrac{f_4^2f_{16}^2}{f_2^5f_8}\right)\dfrac{f_2^9f_6^3}{f_4f_{12}}\nonumber\\
&+192q\left(\dfrac{f_4^3f_6^2}{f_2^2f_{12}}+q\dfrac{f_{12}^3}{f_4}\right)^2\left(\dfrac{f_8^5}{f_2^5f_{16}^2}+2q\dfrac{f_4^2f_{16}^2}{f_2^5f_8}\right)\dfrac{f_2^3f_4^3f_{12}^3}{f_6^3}.\label{eq35}
\end{align}
Extracting the terms in (\ref{eq35}) with even exponents and replacing $q^2$ with $q$ gives
\begin{align*}
\sum_{n=0}^\infty CP_3(24n+11)q^n&\equiv 48\dfrac{f_2^7f_3^5f_4^3f_6^3}{f_1^6f_8^2f_{12}^2}\pmod{96},
\end{align*}
which follows that $CP_3(24n+11)\equiv 0\pmod{48}$. On the other hand, we look for the terms in (\ref{eq35}) with odd exponents, divide by $q$, and replace $q^2$ with $q$ so that
\begin{align*}
\sum_{n=0}^\infty CP_3(24n+23)q^n&\equiv 96\dfrac{f_2^9f_3^5f_6^3f_8^2}{f_1^6f_4^3f_{12}^2}\pmod{192},
\end{align*}
so that $CP_3(24n+23)\equiv 0\pmod{96}$.
\end{proof}

We end this section by presenting an infinite family of congruences modulo twice times powers of $3$, as the following result says.

\begin{theorem}\label{thm39} We have
\begin{align*}
CP_3(2\cdot 3^kn+3^k-2)\equiv 0\pmod{2\cdot 3^{k-1}}
\end{align*}
for all $k, n\geq 1$.
\end{theorem}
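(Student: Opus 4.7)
The plan is to prove the claim by induction on $k$. The base case $k = 1$, namely $CP_3(6n+1) \equiv 0 \pmod{2}$, is immediate from identity (\ref{eq21}) in Theorem \ref{thm35}, since both summands on its right-hand side have even coefficients.

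For the inductive step, set $\alpha_k(q) := \sum_{n \geq 0} CP_3(2\cdot 3^k n + 3^k - 2)\,q^n$. The arithmetic identity $2\cdot 3^{k+1} n + 3^{k+1} - 2 = 2\cdot 3^k(3n+1) + 3^k - 2$ gives $\alpha_{k+1}(q) = T[\alpha_k](q)$, where $T$ is the operator extracting the subseries at exponents $\equiv 1 \pmod 3$, dividing by $q$, and substituting $q^3 \mapsto q$. It thus suffices to show that if $\alpha_k(q) \in 2\cdot 3^{k-1}\,\mathbb{Z}[[q]]$, then $T[\alpha_k](q) \in 2\cdot 3^k\,\mathbb{Z}[[q]]$; equivalently, that the reduction $\alpha_k(q)/(2\cdot 3^{k-1}) \bmod 3$ has vanishing coefficients at exponents $\equiv 1 \pmod 3$.

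To propagate this mod-$3$ control through the induction, I would strengthen the hypothesis by pinning down $\alpha_k/(2\cdot 3^{k-1}) \bmod 3$ in a form tied to the cubic continued fraction. Using $f_m^3 \equiv f_{3m} \pmod 3$, identity (\ref{eq21}) reduces to
\[
\dfrac{\alpha_1(q)}{2} \;\equiv\; \dfrac{f_6^2 f_9^4}{f_1^2 f_3^2 f_{18}^2} \;+\; q\, \dfrac{f_2 f_6^2 f_9 f_{18}}{f_1^2 f_3} \pmod 3,
\]
while setting $y := x(q^3)$ and applying $-2 \equiv 1,\, -7 \equiv 2,\, -8 \equiv 1 \pmod 3$ together with the factorization $1 - t + t^2 = (1+t^3)/(1+t)$ rewrites identities (\ref{eq10}) and (\ref{eq11}) as
\[
y \cdot \dfrac{f_1 f_2}{f_9 f_{18}} \equiv \dfrac{1 + q^3 y^3}{1 + qy}, \qquad y^3 \cdot \dfrac{f_3^4 f_6^4}{f_9^4 f_{18}^4} \equiv (1 + q^3 y^3)^2 \pmod 3.
\]
Because $y \in 1 + q^3\,\mathbb{Z}[[q^3]]$, the factor $1 + q^3 y^3$ lies in $\mathbb{Z}[[q^3]]$; only the elementary factor $1 + qy$ mixes with the $q$-grading. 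This should make the action of $T$ on the mod-$3$ form transparent and should allow one to peel off a factor of $3$ at each inductive step.

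The main obstacle is identifying the precise invariant mod-$3$ ansatz for $\alpha_k/(2\cdot 3^{k-1})$ that is stable under $T$ up to the extracted factor of $3$: the naive hypothesis ``divisible by $2\cdot 3^{k-1}$'' is too weak to iterate, while an overly specific form will fail to be preserved. Once the invariant form is in hand, the induction closes by routine $q$-series manipulations analogous to those in the proofs of Theorems \ref{thm31}--\ref{thm38}, yielding $CP_3(2\cdot 3^k n + 3^k - 2) \equiv 0 \pmod{2\cdot 3^{k-1}}$ for all $k, n \geq 1$.
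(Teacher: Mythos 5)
Your setup is sound: the base case from (\ref{eq21}), the arithmetic identity $2\cdot 3^{k+1}n+3^{k+1}-2 = 2\cdot 3^k(3n+1)+3^k-2$, and the mod-$3$ rewritings of (\ref{eq10}) and (\ref{eq11}) are all correct. But the proof is not complete: you explicitly leave open ``the precise invariant mod-$3$ ansatz for $\alpha_k/(2\cdot 3^{k-1})$ that is stable under $T$,'' and that invariant \emph{is} the entire content of the theorem. As you yourself note, the naive hypothesis that $\alpha_k$ is divisible by $2\cdot 3^{k-1}$ gives no reason whatsoever that $T[\alpha_k]$ gains an extra factor of $3$, so as written the inductive step does not close. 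The mod-$3$ forms you write down for $y\,f_1f_2/(f_9f_{18})$ and $y^3 f_3^4f_6^4/(f_9^4f_{18}^4)$ are suggestive but you never compute how $T$ acts on them, never exhibit a class of series preserved by $T$ up to a factor of $3$, and never verify that $\alpha_1$ lands in that class. A reader cannot reconstruct the argument from what is given.

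The paper closes exactly this gap, and does so with an \emph{exact identity} rather than a mod-$3$ ansatz. It first separates the prime $2$ (via (\ref{eq14}) and Corollary \ref{cor34}) from the prime $3$, reducing to $CP_3(3^kn+3^k-2)\equiv 0\pmod{3^{k-1}}$. Then, using the huffing operator $H_3$ together with the cubic-continued-fraction relations (\ref{eq10})--(\ref{eq11}), it derives
\begin{align*}
\sum_{n=0}^\infty CP_3(3n+1)q^n = 2(f_1f_2f_3f_6)^2 + 27q\sum_{n=0}^\infty CP_3(n)q^n,
\end{align*}
i.e.\ $CP_3(3n+1)=2d(n)+27\,CP_3(n-1)$ where $d(n)$ is the coefficient in $(f_1f_2f_3f_6)^2$, and shows the auxiliary series self-replicates via $d(3n+2)=-3d(n)$. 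Iterating these two recursions yields the closed form $CP_3(3^kn+3^k-2)=3^{k-1}\tfrac{9^k-(-1)^k}{5}d(n)+3^{3k}CP_3(n-1)$, from which the divisibility is immediate. If you want to salvage your approach, the pair $\bigl(\,\sum CP_3(n)q^{n+1},\ \sum d(n)q^n\,\bigr)$ is precisely the two-dimensional ``invariant form'' you were looking for; without identifying it (or something equivalent), the induction cannot be carried out.
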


\begin{proof}
In view of (\ref{eq14}) and Corollary \ref{cor34}, it suffices to prove that
\begin{align}
CP_3(3^kn+3^k-2)\equiv 0\pmod{3^{k-1}}\label{eq36}
\end{align}
for all $k, n\geq 1$. To show (\ref{eq36}), we follow the argument used to prove \cite[Thm. 3.1]{giree}. Let $a = q^{-1}f_1f_2ff_9^{-1}f_{18}^{-1}, b=q^{-1}x(q^3)^{-1}$ and $c = q^{-3}f_3^4f_6^4f_9^{-4}f_{18}^{-4}$. Then by (\ref{eq10}) and (\ref{eq11}) of Lemma \ref{lem23}, we see that
\begin{align}
a &= b-1-\dfrac{2}{b},\label{eq37}\\
c &= b^3-7-\dfrac{8}{b^3}.\label{eq38}
\end{align}
From (\ref{eq37}) and (\ref{eq38}), we note that
\begin{align*}
c+7 = \left(b-\dfrac{2}{b}\right)^3+6\left(b-\dfrac{2}{b}\right) = (a+1)^3+6(a+1),
\end{align*}
which shows that $c = a^3+3a^2+9a$ and 
\begin{align}
\dfrac{1}{a^2}=\dfrac{1}{c^2}(a^4+6a^3+27a^2+54a+81).\label{eq39}
\end{align}
We rewrite (\ref{eq12}) as
\begin{align}
\sum_{n=0}^\infty CP_3(n)q^{n-1} =\dfrac{f_3^6f_6^6}{q^3f_9^2f_{18}^2}\cdot\dfrac{1}{a^2},\label{eq40}
\end{align}
and define the \textquotedblleft huffing" operator $H_3$ modulo $3$ (which is a linear map) by 
\begin{align*}
H_3\left(\sum_{n=0}^\infty a(n)q^n\right) = \sum_{n=0}^\infty a(3n)q^{3n}.
\end{align*}
Using (\ref{eq37}), we compute
\begin{align*}
H_3(1) &= 1,\\
H_3(a) &= -1,\\
H_3(a^2) &= -3,\\
H_3(a^3) &= -\dfrac{8}{b^3}+11+b^3,\\
H_3(a^4) &= \dfrac{32}{b^3}+1-4b^3,
\end{align*}
so that from (\ref{eq38}) and (\ref{eq39}), we get
\begin{align}
H_3\left(\dfrac{1}{a^2}\right)&=\dfrac{1}{c^2}\left(\dfrac{32}{b^3}+1-4b^3-\dfrac{48}{b^3}+66+6b^3-81-54+81\right)\nonumber\\
&= \dfrac{1}{c^2}\left(-\dfrac{16}{b^3}+13+2b^3\right) = \dfrac{2}{c}+\dfrac{27}{c^2}.\label{eq41}
\end{align}
Thus, applying $H_3$ to (\ref{eq40}), using (\ref{eq41}), and replacing $q^3$ with $q$ lead to 
\begin{align}
\sum_{n=0}^\infty CP_3(3n+1)q^n =2(f_1f_2f_3f_6)^2+27q\dfrac{f_3^6f_6^6}{f_1^2f_2^2}.\label{eq42}
\end{align}
We now define 
\begin{align}
(f_1f_2f_3f_6)^2 = q^2(f_3f_6f_9f_{18})^2a^2 := \sum_{n=0}^\infty d(n)q^n,\label{eq43}
\end{align}
so that, in view of (\ref{eq42}), we have
\begin{align}
CP_3(3n+1) = 2d(n)+27CP_3(n-1)\label{eq44}
\end{align}
for all $n\geq 1$. Applying $H_3$ to (\ref{eq43}) and replacing $q^3$ with $q$ lead to 
\begin{align*}
\sum_{n=0}^\infty d(3n+2)q^n=-3(f_1f_2f_3f_6)^2,
\end{align*}
which means that
\begin{align}
d(3n+2) = -3d(n)\label{eq45}
\end{align}
for all $n\geq 0$. Thus, from (\ref{eq44}) and (\ref{eq45}), we get
\begin{align*}
CP_3(9n+7) &= 2d(3n+2)+27CP_3(3n+1)=-6d(n)+27(2d(n)+27CP_3(n-1))\\
&=48d(n)+729CP_3(n-1)
\end{align*}
and by induction, we finally arrive at
\begin{align*}
CP_3(3^kn+3^k-2) = 3^{k-1}\cdot\dfrac{9^k-(-1)^k}{5}\cdot d(n)+3^{3k}CP_3(n-1)
\end{align*}
for all $k,n\geq 1$. Since $9^k-(-1)^k$ is divisible by $5$ for all $k\geq 1$, we immediately obtain (\ref{eq36}).
\end{proof}

\end{document}